\documentclass[12pt]{article}
\usepackage{amssymb,amsfonts,amsmath,latexsym,epsf,url}
\usepackage[usenames,dvipsnames]{pstricks}
\usepackage{pstricks-add}
\usepackage{subfigure}
\usepackage{graphicx}
\usepackage{color,soul}
\usepackage{epsfig}
\usepackage{tikz}
\usepackage[colorlinks]{hyperref}
\usepackage{cite}
\usepackage{algorithm}
\usepackage{algpseudocode}
\usepackage{graphicx}
\usepackage{booktabs}
\usepackage{multirow}

\newtheorem{theorem}{Theorem}[section]

\newtheorem{lemma}[theorem]{Lemma}

\newtheorem{example}[theorem]{Example}

\begin{document}

\def\nt{\noindent}

\title{On the Spectrum of the Line Graph of a Family of Bipartite Graphs Arising from the Boolean Lattice}

\author{
	Ali Zafari$^1$\footnote{Corresponding author} \and	
Saeid Alikhani$^{2}$
}


\maketitle

\begin{center}

$^1$Department of Mathematics, Faculty of Science,
Payame Noor University, P.O. Box 19395-4697, Tehran, Iran\\ 
{\tt zafari.math@pnu.ac.ir}
\medskip

$^{2}$Department of Mathematical Sciences, Yazd University, 89195-741, Yazd, Iran\\
{\tt alikhani@yazd.ac.ir}

\end{center}
	
\begin{abstract}
The Boolean lattice $BL_n$, $n\geq 3$, is the graph whose vertex set is the collection of all subsets of $[n]=\{1,2,\ldots,n\}$, where two subsets $U$ and $W$ are adjacent if and only if their symmetric difference has precisely one element. In the graph $BL_n$, the \emph{layer} $L_k$ is the family of all $k$-element subsets of $[n]$. The subgraph $BL_n(k-1,k)$ is the induced subgraph of $BL_n$ on layers $L_{k-1}$ and $L_{k}$. This graph is bipartite and, when $n=2k-1$, is $k$-regular and isomorphic to the bipartite double cover $2{\cdot}O_k$ of the odd graph $O_k$. In this paper, we determine the full adjacency spectrum---eigenvalues together with their multiplicities---of the line graph $L(BL_n(k-1,k))$ for all admissible values of $n$ and $k$. As a consequence, we show that $L(BL_n(k-1,k))$ is an integral graph whenever $n = 2k-1$, and we recover as a special case the spectrum of the line graph $L(n)$ of $BL_n(1,2)$ established by Mirafzal~\cite{pap-sm-1}.
\end{abstract}

\noindent{\bf Keywords:} Boolean lattice, Integral graph, Johnson graph, Line graph, Odd graph, Spectrum.

\medskip
\noindent{\bf AMS Subj.\ Class.:}  05C50, 05C25.

\section{Introduction}
\label{sec:introduction}

\subsection*{Background and motivation}

Spectral graph theory---the study of graphs through the eigenvalues and eigenvectors of their associated matrices---is one of the most active areas of algebraic combinatorics. The adjacency spectrum of a graph encodes a wealth of structural information: it governs expansion properties, random walk mixing times, the existence of perfect matchings, and chromatic properties, among many other things. A comprehensive account of the classical theory can be found in the monographs of Brouwer and Haemers~\cite{Brower-2} and Godsil and Royle~\cite{pap-cg-1}.

Among the most studied graph families in this context are \emph{distance-regular graphs}, which include the Johnson graphs $J(n,k)$, the Hamming graphs, the Kneser graphs, the odd graphs $O_k$, and the hypercube $Q_n$. Their spectra are completely determined by their intersection arrays, and a rich combinatorial theory has been developed for them; see Brouwer, Cohen, and Neumaier~\cite{Brower-1} for a comprehensive treatment. In recent years, considerable attention has shifted to graphs derived from these classical families---their line graphs, square graphs, subdivision graphs, and bipartite doubles---and to determining the spectra of these derived graphs.

\subsection*{Integral graphs}

A graph $\Gamma$ is called \emph{integral} if all eigenvalues of its adjacency matrix are integers. The concept was introduced by Harary and Schwenk~\cite{Harary-1} in 1974, who raised the question of characterizing all integral graphs. Despite several decades of effort, a complete classification remains elusive, and the search for new infinite families of integral graphs is an active line of research.

Watanabe and Schwenk in ~\cite{Watanabe} proved that a tree is integral if and only if it belongs to a specific finite set. Bussemaker, Cvetković ~\cite{Bussemaker}, and 
Schwenk ~\cite{Schwenk} independently classified cubic integral graphs, finding exactly 13 non-isomorphic examples. The study was extended to 4-regular graphs by Cvetković, Simić, and Stevanović in ~\cite{D.Cvetkovć}, who identified 1998 possible spectra for 4-regular bipartite integral graphs; see also the work of Ahmadi et al.~\cite{Ahmadi-1}. A significant source of infinite families of integral graphs is furnished by Cayley graphs: Abdollahi and Vatandoost~\cite{Abdollahi-1} characterized which Cayley graphs over abelian groups are integral, and subsequent work has produced integral Cayley graphs over many non-abelian groups. We refer to the monograph~\cite{Brower-2} and the references therein for a broader survey.

\subsection*{The Boolean lattice and its layer graphs}

The \emph{Boolean lattice} $BL_n$, $n\geq 3$, is the graph whose vertex set is the collection of all subsets of $[n]=\{1,2,\ldots,n\}$, where two subsets $U$ and $W$ are adjacent if and only if their symmetric difference has precisely one element. In other words, $BL_n$ is the $n$-dimensional hypercube $Q_n$, where vertices are identified with binary strings of length $n$ via characteristic vectors of subsets. In the graph $BL_n$, the \emph{layer} $L_k$ is the family of all $k$-element subsets of $[n]$. The induced subgraph $BL_n(k-1,k)$ on layers $L_{k-1}$ and $L_{k}$ is a bipartite graph with vertex bipartition $V_1 \cup V_2$, where
\[
V_1 = \{ U \subset [n] \mid |U| = k-1 \}, \quad |V_1| = \binom{n}{k-1},
\]
and
\[
V_2 = \{ W \subset [n] \mid |W| = k \}, \quad |V_2| = \binom{n}{k}.
\]
Each vertex in $V_1$ has degree $n-k+1$, and each vertex in $V_2$ has degree $k$, giving
\[
|V(BL_n(k-1,k))| = \binom{n+1}{k}, \qquad |E(BL_n(k-1,k))| = k\binom{n}{k}.
\]

These bipartite layer graphs enjoy rich structural properties. By~\cite{pap-sm-2}, the graph $BL_n(k-1,k)$ is isomorphic to the consecutive-layer graph $Q_n(k-1,k)$ of the hypercube. Furthermore, as shown by Cao, Lv, and Wang~\cite{M.Cao}, for $n\geq k$ the graph $BL_n(k-1,k)$ is isomorphic to the \emph{doubled Johnson graph} $J(n,k-1,k)$.

The graph $BL_n(k-1,k)$ is regular if and only if $n = 2k-1$. In this special case, every vertex has degree $k$, and the graph is isomorphic to the \emph{bipartite double cover} $2{\cdot}O_k$ of the odd graph $O_k$, a well-known distance-transitive graph (see~\cite{N.Biggs-1}). Consequently, when $n = 2k-1$, the line graph $L(BL_n(k-1,k))$ is vertex-transitive with degree $2k-2$, and its vertex and edge counts are
\[
|V(L(BL_n(k-1,k)))| = k\binom{2k-1}{k-1}, \qquad |E(L(BL_n(k-1,k)))| = k(k-1)\binom{2k-1}{k-1}.
\]

\subsection*{Line graphs and their spectra}

Recall that the \emph{line graph} $L(\Gamma)$ of a graph $\Gamma$ is the graph whose vertices are the edges of $\Gamma$, two vertices of $L(\Gamma)$ being adjacent whenever the corresponding edges of $\Gamma$ share an endpoint. A fundamental relationship between a graph and its line graph is given by the identity $B^T B = A(L(\Gamma)) + 2I$, where $B$ is the incidence matrix of $\Gamma$; this makes it possible to read off the spectrum of $L(\Gamma)$ from the singular values of $B$. Line graphs of bipartite graphs always have $-2$ as an eigenvalue, and by the Perron--Frobenius theorem, the largest eigenvalue of $L(\Gamma)$ equals $\Delta(\Gamma) + \delta(\Gamma) - 2$ when $\Gamma$ is biregular.

The spectra of line graphs of classical combinatorial graphs have attracted sustained attention. The line graph of the complete graph $K_n$ is the triangular graph $T(n)$, whose spectrum is classical. Mirafzal~\cite{pap-sm-1} introduced the family $H(n) = BL_n(1,2)$ and showed that its line graph $L(n)$ is a vertex-transitive integral graph with exactly five distinct eigenvalues $n-1, n-2, 0, -1, -2$; however, the multiplicities of these eigenvalues were left undetermined in~\cite{pap-sm-1}. Mirafzal also studied algebraic properties of line graphs arising from consecutive layers of the hypercube in~\cite{pap-sm-2}, where it is shown that these line graphs are Cayley graphs under certain conditions. More recently, Mirafzal~\cite{Mirafzal-crown} established that the line graph of the crown graph is distance-integral, and Kogani and Mirafzal~\cite{Kogani-Mirafzal} determined the distance spectrum of a broader class of distance-integral graphs. These results reflect a general trend of deriving spectral properties of derived graphs from those of well-understood base graphs.

\subsection*{Contribution of this paper}

In this paper, we determine the complete adjacency spectrum of the line graph $L(BL_n(k-1,k))$ for all $n \geq 2k-1$. Since $BL_n(k-1,k) \cong BL_n(n-k,n-k+1)$, we may assume without loss of generality that $n \geq 2k-1$. Our approach rests on two key observations: (i) the spectrum of $BL_n(k-1,k)$ can be deduced from that of the Johnson graph $J(n,k-1)$, and (ii) the spectrum of the line graph $L(BL_n(k-1,k))$ can then be extracted from the nonzero singular values of the incidence matrix $B$ of $BL_n(k-1,k)$ via Lemma~\ref{b.1}.

More precisely, we obtain the following. When $n \geq 2k$, the eigenvalues of $L(BL_n(k-1,k))$ are integers and their multiplicities are expressed in terms of binomial coefficients (Theorem~\ref{c.2}). When $n = 2k-1$, the graph $BL_n(k-1,k)$ is itself a distance-transitive regular graph, and its line graph is vertex-transitive with an entirely integral spectrum; we determine this spectrum completely in Theorem~\ref{c.3}. As a special case (Example~\ref{c.2.1}), we recover and complete the spectrum of $L(n)$ left open by Mirafzal in~\cite{pap-sm-1}: the multiplicities of the distinct eigenvalues $n-1, n-2, 0, -1, -2$ of $L(n)$ are $1, n-1, \tfrac{n(n-3)}{2}, n-1$, and $\tfrac{(n-1)(n-2)}{2}$, respectively.

\subsection*{Notation and adjacency spectra}

The \emph{adjacency matrix} $A(\Gamma)$ of a graph $\Gamma$ is the symmetric $\{0,1\}$-matrix whose rows and columns are indexed by $V(\Gamma)$, with the $(i,j)$-entry equal to 1 if $\{i,j\} \in E(\Gamma)$ and 0 otherwise. The \emph{eigenvalues} of $\Gamma$ are the roots of the characteristic polynomial $\det(xI_n - A(\Gamma))$. The \emph{spectrum} of $\Gamma$ is the multiset of eigenvalues of $A(\Gamma)$. When the distinct eigenvalues are $\lambda_1 > \lambda_2 > \cdots > \lambda_r$ with respective multiplicities $m_1, m_2, \ldots, m_r$, we write
\[
\mathrm{Spec}(\Gamma) = \bigl\{\lambda_1^{m_1},\, \lambda_2^{m_2},\, \ldots,\, \lambda_r^{m_r}\bigr\}.
\]
The graph $\Gamma$ is called \emph{integral} if all eigenvalues belong to $\mathbb{Z}$. Integral graphs form a distinguished class with applications in coding theory, quantum walks, and network design; see~\cite{Abdollahi-1,Ahmadi-1,Harary-1} and the references therein for recent developments.

\section{Definitions and Preliminaries}
All graphs in this paper are assumed to be finite, simple, and connected. We follow standard notation from~\cite{pap-cg-1} for concepts not defined here.
The \emph{incidence matrix} $B$ of a graph $\Gamma$ is the $\{0,1\}$-matrix with rows indexed by vertices and columns indexed by edges, such that the $(i,j)$-entry equals $1$ if and only if vertex $i$ is an endpoint of edge $j$. If $\Gamma$ has $n$ vertices and $e$ edges, then $B$ has order $n \times e$.

The rank of the incidence matrix has a clean combinatorial description: if $\Gamma$ is a graph with $n$ vertices and $c_0$ bipartite connected components, then $\mathrm{rank}(B) = n - c_0$. In particular, if $\Gamma$ is a connected bipartite graph with $n$ vertices, then $\mathrm{rank}(B) = n - 1$.

The \emph{line graph} of a graph $\Gamma$ is the graph $L(\Gamma)$ whose vertex set is $E(\Gamma)$, with two edges of $\Gamma$ being adjacent in $L(\Gamma)$ if and only if they share an endpoint in $\Gamma$.

\begin{lemma}[\cite{pap-cg-1}]\label{b.1} 
Let $B$ be the incidence matrix of the graph $\Gamma$ on $n$ vertices and $m$ edges, and let $L$ be
the line graph of $\Gamma$. Then $B^T B =  A(L)+2I_{m}$,  where $A(L)$ is the adjacency matrix of the line graph of $\Gamma$.
\end{lemma}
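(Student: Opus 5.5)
The identity is verified entrywise. Fix two edges $e_i,e_j$ of $\Gamma$, i.e.\ two columns of $B$. By definition of the matrix product,
\[
(B^TB)_{ij}=\sum_{v\in V(\Gamma)} B_{vi}B_{vj}.
\]
Each term $B_{vi}B_{vj}$ equals $1$ exactly when $v$ is an endpoint of both $e_i$ and $e_j$, and $0$ otherwise. So $(B^TB)_{ij}$ counts the vertices common to the two edges. We then split into the diagonal and off-diagonal cases.

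\emph{Diagonal entries ($i=j$).} Every edge of a simple graph has exactly two distinct endpoints, so the column of $B$ indexed by $e_i$ has exactly two entries equal to $1$. Hence $(B^TB)_{ii}=2$. Since $L$ is simple, $A(L)_{ii}=0$, and therefore $(A(L)+2I_m)_{ii}=2$.

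\emph{Off-diagonal entries ($i\neq j$).} Two distinct edges of a simple graph share at most one endpoint; sharing two would make them parallel. So $(B^TB)_{ij}\in\{0,1\}$, and it equals $1$ precisely when $e_i$ and $e_j$ have a common endpoint. By the definition of the line graph, this is exactly the condition $A(L)_{ij}=1$. Since $(2I_m)_{ij}=0$, the two matrices agree off the diagonal as well, and $B^TB=A(L)+2I_m$.

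There is no real obstacle here. The only point needing care is the simplicity hypothesis, which the paper assumes for all graphs. It is used twice: the absence of loops gives the diagonal value $2$, and the absence of multiple edges bounds the off-diagonal entries by $1$.
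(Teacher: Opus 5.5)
Your proof is correct and is the standard argument. The paper gives no proof of its own and simply cites Godsil and Royle, where the identity is established by the same entrywise count: each column of $B$ has two ones, so the diagonal entries are $2$, and an off-diagonal entry counts the common endpoints of two distinct edges, which is $0$ or $1$ in a simple graph.
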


\begin{lemma}[\cite{pap-cg-1}]\label{b.2} 
Let $B$ be the incidence matrix of the graph $\Gamma$ on $n$ vertices and $m$ edges. Then $BB^T  = \Delta(\Gamma) +A(\Gamma)$, where $\Delta(\Gamma)$ is the diagonal 
$n \times n$ matrix whose $ii$-entry is equal to the degree of vertex $i$.
\end{lemma}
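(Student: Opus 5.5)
Lemma~\ref{b.2} is the identity $BB^T=\Delta(\Gamma)+A(\Gamma)$. I will prove it by comparing the $(i,j)$-entries of the two sides directly. Throughout, $\Gamma$ is simple, as assumed in this paper, and $B$ is the $n\times m$ vertex--edge incidence matrix defined above.

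First, I write out a general entry of the product:
\[
(BB^T)_{ij}=\sum_{e\in E(\Gamma)} B_{ie}B_{je}.
\]
Each summand $B_{ie}B_{je}$ equals $1$ exactly when both $i$ and $j$ are endpoints of the edge $e$, and equals $0$ otherwise. So $(BB^T)_{ij}$ counts the edges that contain both $i$ and $j$.

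Next, I treat the diagonal and off-diagonal entries separately.
\begin{itemize}
\item When $i=j$, the summand is $B_{ie}^2=B_{ie}$. The sum therefore counts the edges incident to $i$, so $(BB^T)_{ii}=\deg(i)$. This is the $ii$-entry of $\Delta(\Gamma)$. Since $\Gamma$ has no loops, $A(\Gamma)_{ii}=0$, and the diagonal entries agree.
\item When $i\neq j$, an edge containing both $i$ and $j$ must be the edge $\{i,j\}$. Because $\Gamma$ is simple, there is at most one such edge. Hence $(BB^T)_{ij}=1$ if $i\sim j$ and $0$ otherwise, which is exactly $A(\Gamma)_{ij}$. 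Since $\Delta(\Gamma)$ is diagonal, $\Delta(\Gamma)_{ij}=0$, and the off-diagonal entries agree.
\end{itemize}

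All entries of the two matrices coincide, so $BB^T=\Delta(\Gamma)+A(\Gamma)$.

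The argument has no real obstacle. The only points needing care are that $B_{ie}^2=B_{ie}$ for a $0/1$ matrix, and that simplicity is used twice: no loops for the diagonal, and no multiple edges for the off-diagonal entries.
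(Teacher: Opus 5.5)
Your proof is correct and complete, and it uses the standard argument. The paper gives no proof of its own for this lemma and simply cites Godsil and Royle, where the identity comes from the same entry-by-entry count of edges common to vertices $i$ and $j$, using simplicity exactly as you do.
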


\begin{theorem}[\cite{Brower-1}]\label{b.3} 
The Johnson graph $J(n,k-1)$ has eigenvalues
\[
\lambda_j = (k-1-j)(n-k+1-j)-j,
\]
for $j=0,1,\dots,k-1$, with multiplicities
\[
m_j = \binom{n}{j} - \binom{n}{j-1}.
\]
\end{theorem}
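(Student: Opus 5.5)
We prove Theorem~\ref{b.3} with the up and down operators on the Boolean lattice. Put $s=k-1$. Since $J(n,s)\cong J(n,n-s)$ via complementation, and the expression $(s-j)(n-s-j)-j$ is symmetric under $s\leftrightarrow n-s$, we may assume $s\le n/2$. (With $s>n/2$ the index $j$ would run to $\min(s,n-s)$; this is the only reading under which the multiplicities are nonnegative.)

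\textbf{Operators and the commutation relation.} Let $\mathcal V_i$ be the real vector space with basis the $i$-subsets of $[n]$. Define $U:\mathcal V_i\to\mathcal V_{i+1}$ by sending $X$ to the sum of all $(i+1)$-supersets of $X$. Let $D$ be its transpose, which sends $X$ to the sum of all $(i-1)$-subsets of $X$. A direct count of common neighbours gives two identities on $\mathcal V_i$:
\[
DU=(n-i)I+A_i, \qquad UD=iI+A_i,
\]
where $A_i$ is the adjacency operator of $J(n,i)$. Hence $DU-UD=(n-2i)I$ on $\mathcal V_i$. In particular, on level $s$ we have $A(J(n,s))=DU-(n-s)I$.

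\textbf{Harmonic subspaces and their images.} For $0\le j\le s$ let $H_j=\ker(D|_{\mathcal V_j})$. Fix $v\in H_j$ and set $m=s-j$. Inducting on $m$ with the commutation relation gives
\[
DU^{m}v=m(n-2j-m+1)\,U^{m-1}v.
\]
Therefore $U^{m}v$ is an eigenvector of $DU$ on level $s$ with eigenvalue $(m+1)(n-2j-m)$. Subtracting $n-s$ and substituting $m=s-j$ yields eigenvalue $(s-j)(n-s-j)-j$ for $A(J(n,s))$. With $s=k-1$ this is exactly $\lambda_j$.

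\textbf{Multiplicities; this is the main obstacle.} We must show that $U^{s-j}$ is injective on $H_j$, that $\dim H_j=\binom nj-\binom n{j-1}$, and that the images span $\mathcal V_s$.
\begin{itemize}
\item Injectivity: iterating the displayed formula shows $D^{m}U^{m}v$ is a product of the factors $t(n-2j-t+1)$, $1\le t\le m$, times $v$. Each factor is positive because $j+t\le s\le n/2$, so $U^m v\neq 0$ for $v\neq 0$.
\item Dimension of $H_j$: the same positivity gives $DU>0$ on $\mathcal V_{j-1}$ for $j\le n/2$. Hence $U:\mathcal V_{j-1}\to\mathcal V_j$ is injective, so $D:\mathcal V_j\to\mathcal V_{j-1}$ is surjective, and $\dim H_j=\binom nj-\binom n{j-1}$.
\item Directness and spanning: the values $\lambda_j$ are strictly decreasing in $j$, so the subspaces $U^{s-j}H_j$ lie in distinct eigenspaces and their sum is direct. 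The telescoping sum $\sum_{j=0}^{s}\bigl(\binom nj-\binom n{j-1}\bigr)=\binom ns$ shows they fill $\mathcal V_s$.
\end{itemize}
This gives $m_j=\binom nj-\binom n{j-1}$. The delicate points are keeping track of the constants in the commutation computation and checking positivity of every factor in the range $s\le n/2$.
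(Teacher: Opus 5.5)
Your proof is correct, and it takes a genuinely different route from the paper. The paper does not prove Theorem~\ref{b.3} at all: it quotes it from \cite{Brower-1}, where the spectrum of $J(n,s)$ is obtained from distance-regular and association-scheme theory (eigenvalues from the intersection array, multiplicities from the standard multiplicity formula or the eigenmatrix of the Johnson scheme).

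Your up/down-operator argument is self-contained and elementary, and the details check out:
\begin{itemize}
\item the identities $DU=(n-i)I+A_i$ and $UD=iI+A_i$;
\item the recursion $DU^{m}v=m(n-2j-m+1)U^{m-1}v$ for $v\in H_j$;
\item positivity of every factor $t(n-2j-t+1)$ when $j+t\le s\le n/2$;
\item the gaps $\lambda_j-\lambda_{j+1}=n-2j>0$, which give directness;
\item the telescoping dimension count.
\end{itemize}
One small tightening: positive definiteness of $DU$ on $\mathcal V_{j-1}$ is not literally ``the same positivity'' as the product formula. The one-line reason is that on $\mathcal V_{j-1}$ you have $DU=UD+(n-2j+2)I=D^{T}D+(n-2j+2)I$, and $n-2j+2>0$.

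As for what each route buys: the citation is short and covers every $s$ without your complementation reduction. Your route instead exhibits the eigenspaces $U^{s-j}H_j$ explicitly, and it fits the way the paper later uses the theorem.
\begin{itemize}
\item The paper's $N$ is the matrix of $D:\mathcal V_k\to\mathcal V_{k-1}$, so $NN^{T}=DU$ on $\mathcal V_{k-1}$.
\item Your recursion with $s=k-1$ gives directly the values $\theta_j=(k-j)(n-k+1-j)$ of Theorem~\ref{c.1}, together with their positivity and hence the invertibility of $NN^{T}$.
\item The kernel of $N$ is exactly $H_k$, of dimension $\binom nk-\binom n{k-1}$, which accounts for the eigenvalue $0$ of $BL_n(k-1,k)$.
\end{itemize}
All of this follows without passing through $J(n,k-1)$. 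Your standing assumption $s\le n/2$ is precisely the paper's regime $n\ge 2k-1$.
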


\section{The Spectrum of the Line Graph of $BL_n(k-1,k)$}

In this section we determine the adjacency spectrum of the line graph $L(BL_n(k-1,k))$. We first compute the spectrum of $BL_n(k-1,k)$ itself (Theorem~\ref{c.1}), and then use Lemma~\ref{b.1} to derive the spectrum of its line graph in the two cases $n \geq 2k$ (Theorem~\ref{c.2}) and $n = 2k-1$ (Theorem~\ref{c.3}).

\begin{theorem}\label{c.1}
If $n \geq 2k-1$, is a fixed positive integer, then $BL_n(k-1,k)$ has eigenvalues 
\begin{itemize}
\item  $\mu_j = \pm \sqrt{(k-j)(n-k+1-j)},$ for $j=0,1,2,\ldots,k-1$, with multiplicities $m(\mu_j)$,
\item 0, with multiplicity $m(0) = \binom{n}{k} - \binom{n}{k-1}$,
\end{itemize}
where $m(\mu_j) = \binom{n}{j} - \binom{n}{j-1}$.
\end{theorem}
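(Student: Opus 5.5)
The plan is to exploit the bipartite structure. Let $N$ be the $\binom{n}{k-1}\times\binom{n}{k}$ inclusion matrix between $V_1$ and $V_2$, so that
\[
A(BL_n(k-1,k))=\begin{pmatrix}0&N\\ N^T&0\end{pmatrix}.
\]
The nonzero eigenvalues of a bipartite adjacency matrix of this form are exactly $\pm\sigma$, where $\sigma$ runs over the nonzero singular values of $N$, each sign occurring with the multiplicity of $\sigma^2$ as an eigenvalue of $NN^T$. The remaining eigenvalues are $0$, and their number is $\binom{n+1}{k}-2\,\mathrm{rank}(N)$. So the task reduces to computing the spectrum of the $\binom{n}{k-1}\times\binom{n}{k-1}$ matrix $NN^T$ and its rank.

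\textbf{Step 1: compute $NN^T$.} For $U,U'\in V_1$, the entry $(NN^T)_{U,U'}$ counts the $k$-sets $W$ with $U\cup U'\subseteq W$.
\begin{itemize}
\item If $U=U'$, there are $n-k+1$ such sets.
\item If $|U\cap U'|=k-2$, i.e.\ $U,U'$ are adjacent in $J(n,k-1)$, there is exactly one ($W=U\cup U'$).
\item Otherwise there are none.
\end{itemize}
Hence
\[
NN^T=(n-k+1)I+A(J(n,k-1)).
\]

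\textbf{Step 2: read off the spectrum of $NN^T$.} By Theorem~\ref{b.3}, the eigenvalues of $NN^T$ are
\[
(n-k+1)+(k-1-j)(n-k+1-j)-j
\]
with multiplicities $\binom{n}{j}-\binom{n}{j-1}$ for $j=0,\dots,k-1$. A short expansion shows this equals $(k-j)(n-k+1-j)$. Writing $a=k-1-j$ and $b=n-k+1-j$, we have $(n-k+1)-j+ab=b+ab=(a+1)b$.

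\textbf{Step 3: positivity and rank.} Since $j\le k-1$ and $n\ge 2k-1$, both $k-j\ge1$ and $n-k+1-j\ge n-2k+2\ge1$ hold. So every eigenvalue of $NN^T$ is positive, and $NN^T$ is nonsingular. This is the one step needing the hypothesis, and the main point to check carefully. It gives $\mathrm{rank}(N)=\binom{n}{k-1}$.

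\textbf{Step 4: conclude.} The eigenvalues $\mu_j=\pm\sqrt{(k-j)(n-k+1-j)}$ of $BL_n(k-1,k)$ each occur with multiplicity $m(\mu_j)=\binom{n}{j}-\binom{n}{j-1}$. The zero eigenvalue has multiplicity
\[
\binom{n}{k}+\binom{n}{k-1}-2\binom{n}{k-1}=\binom{n}{k}-\binom{n}{k-1}.
\]
This is nonnegative because $n\ge 2k-1$. If two values of $j$ gave equal $\mu_j$, their multiplicities would simply add, but the values are distinct since $(k-j)(n-k+1-j)$ is strictly decreasing in $j$.

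For the bipartite singular-value fact, I will justify it directly. If $NN^Tx=\sigma^2x$ with $\sigma>0$, then $(x,\pm N^Tx/\sigma)$ are eigenvectors of $A$ for $\pm\sigma$. Linear independence is inherited from an orthogonal eigenbasis of $NN^T$.
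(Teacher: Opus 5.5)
Your proposal is correct and follows the paper's proof essentially step for step: the bipartite block form of $A$, the identity $NN^T=(n-k+1)I+A(J(n,k-1))$, the shift of the Johnson spectrum to $(k-j)(n-k+1-j)$, and the rank count $\binom{n+1}{k}-2\binom{n}{k-1}$ for the zero eigenvalue. Your version is slightly more careful than the paper's, since you derive the invertibility of $NN^T$ from $n\ge 2k-1$ (the paper only asserts it) and you verify the $\pm\sigma$ eigenvector correspondence instead of citing it as well known.
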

\begin{proof}
Let $G=BL_n(k-1,k)$ and $[n]=\{1, 2, \ldots, n\}$.
Recall that $V(G)=V_1\cup V_2$, where
\[
V_1 = \{ U \subset [n] \mid |U| = k-1 \}, \quad |V_1| = \binom{n}{k-1},
\]
and
\[
V_2 = \{ W \subset [n] \mid |W| = k \}, \quad |V_2| = \binom{n}{k},
\]
is defined already. Hence, the valency of  each vertex in $V_1$ is $n-k+1$, and  the valency for each vertex in $V_2$ is $k$. Then 
\[
|V(BL_n(k-1,k))| = \binom{n+1}{k}, \quad |E(BL_n(k-1,k))| = k\binom{n}{k}.
\]
Now, let $B$ be the incidence matrix of the graph $G$,  $\Delta(G)$ be the diagonal degree matrix of graph $G$, and $A(G)$ be the adjacency matrix of  graph $G$. Since $G$ is bipartite, then
\[
A(G) = \begin{pmatrix}
0 & N \\
N^{T} & 0
\end{pmatrix},
\]
where $N$ is the $\binom{n}{k-1} \times \binom{n}{k}$ bipartite incidence matrix so that rows correspond to vertices in the first part $k-1$-subsets of $G$, and columns correspond to vertices in the second part $k$-subsets of $G$.
Since $rank(N)= rank (N^{T})$, then $rank(A(G))= 2rank (N)$. Also, after ordering vertices of $G$ so that the $(k-1)$-subsets come first and the $k$-subsets second, we have
\[
\Delta(BL_n(k-1,k)) = 
\begin{pmatrix}
(n-k+1)I_{\binom{n}{k-1}} & 0 \\
0 & kI_{\binom{n}{k}}
\end{pmatrix}.
\]
Hence based on Lemma \ref{b.2},
\[
BB^{T} = A(G) +\begin{pmatrix}
(n-k+1)I_{\binom{n}{k-1}} & 0 \\
0 & kI_{\binom{n}{k}}
\end{pmatrix}.
\]
On the other hand, 
\[
{A(G)}^2 = \begin{pmatrix}
NN^{T} & 0 \\
0 & N^{T}N
\end{pmatrix},
\]
and it is well known that the squares of the nonzero eigenvalues of $A(G)$ are exactly the nonzero eigenvalues of $NN^{T}$.  
Therefore, we compute $NN^{T}$. We know that  $NN^{T}$ is  $\binom{n}{k-1} \times \binom{n}{k-1}$ matrix. 
Now let $X$ and $Z$ be  $k-1$ subset of $V_1$. If $|X\cap Z|=t$, then  $0\leq t\leq k-1$. Hence, if $t=k-1$ then $X=Z$, and since, each vertex of $V_1$ in graph $G$ is adjacent to exactly $n-k+1$, vertices in $V_2$,  
then
\[
(NN^{T})_{XX} = n-k+1.
\]
In particular,  if $t=k-2$ then $X\neq Z$, and since, there is exactly one $k$-set of $V_2$ so that is adjacent to $k-1$ subsets  $X$ and $Z$ of $V_1$, and hence
\[
(NN^{T})_{XZ} = 1.
\]
Especially, if $t\leq k-3$ then $X\neq Z$, and since, there is no $k$-set of $V_2$ so that is adjacent to $k-1$ subsets  $X$ and $Z$ of $V_1$, and hence
\[
(NN^{T})_{XZ} = 0.
\]
Thus,  if we consider the adjacency matrix $A_{J}$ of $J(n, k-1)$, then we can verify that
\[
NN^T = (n-k+1)I_{\binom{n}{k-1}} + A_{J},
\]
Moreover, based on Theorem \ref{b.3}, Johnson graph $J(n,k-1)$ has eigenvalues
\[
\lambda_j = (k-1-j)(n-k+1-j)-j,
\]
for $j=0,1,\dots,k-1$, with multiplicities
\[
m(\lambda_j) = \binom{n}{j} - \binom{n}{j-1}.
\]
Hence, $NN^{T}$ has nonzero eigenvalues
\[
\theta_j =(n-k+1)+\lambda_j=(k-j)(n-k+1-j),
\]
for $j=0,1,\dots,k-1$, with the same multiplicities $m(\theta_{j})=m(\lambda_{j})$. Thus,  $A(G)$  has nonzero eigenvalues 
\[
\mu_j = \pm \sqrt{(k-j)(n-k+1-j)},
\]
for  $j=0,1,2,\ldots,k-1$, with the same multiplicities $m(\mu_j)=m(\lambda_j)$. On the other hand, $NN^{T}$ is an invertible matrix, and hence $\mathrm{rank}(NN^{T})= \binom{n}{k-1}$, which implies $\mathrm{rank}(N)=\binom{n}{k-1}$. 
Therefore, $\mathrm{rank}(A(G))=2\binom{n}{k-1}$, and since
for $k\geq2$ we have $\binom{n+1}{k}>2\binom{n}{k-1}$, it follows that $0$ is an eigenvalue of $A(G)$ with multiplicity 
\[
m(0) = \binom{n+1}{k} -2\binom{n}{k-1} =  \binom{n}{k} - \binom{n}{k-1}. 
\]
Note that if $n=2k-1$, then $\binom{n}{k} - \binom{n}{k-1}=0$, and hence in this case $0$ is not an eigenvalue of $BL_n(k-1,k)$.
\end{proof}
\begin{theorem}\label{c.2}
If $k\geq2$ is a positive integer and $n\geq2k$, then the line graph $L(BL_n(k-1,k))$ has eigenvalues 
\begin{itemize}
\item $ -2$ with multiplicity $k\binom{n}{k} - \binom{n+1}{k} + 1$,
\item  $\mu_j = j-2$, for $j=1,2,\ldots,k-1$, with the multiplicities  $m(\mu_j)$,
\item  $\mu_j = n-1-j$, for $j=0,1,2,\ldots,k-1$, with the multiplicities  $m(\mu_j)$,
\item $ k-2$ with multiplicity $\binom{n}{k} - \binom{n}{k-1}$,
\end{itemize}
where
\[
m(\mu_j) = \binom{n}{j} - \binom{n}{j-1}.
\] 
\end{theorem}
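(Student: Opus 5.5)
The plan is to pass from the incidence matrix $B$ of $G=BL_n(k-1,k)$ to the line graph using Lemma~\ref{b.1} and Lemma~\ref{b.2}. Since $B^TB$ and $BB^T$ have the same nonzero eigenvalues with the same multiplicities, the spectrum of $A(L(G))=B^TB-2I$ is obtained in two pieces. Each nonzero eigenvalue $\sigma$ of $BB^T$ contributes $\sigma-2$. The remaining $|E(G)|-\mathrm{rank}(B)$ eigenvalues are equal to $-2$. Since $G$ is connected and bipartite, $\mathrm{rank}(B)=\binom{n+1}{k}-1$. This gives the multiplicity $k\binom{n}{k}-\binom{n+1}{k}+1$ for $-2$ immediately, so the real work is computing the spectrum of
\[
BB^T=\begin{pmatrix}(n-k+1)I & N\\ N^T & kI\end{pmatrix}.
\]

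I will not try to read this off from the spectrum of $A(G)$ in Theorem~\ref{c.1}, because the degree matrix is not scalar. Instead I will use the singular value decomposition of $N$. By the proof of Theorem~\ref{c.1}, $NN^T$ has eigenvalues $\theta_j=(k-j)(n-k+1-j)$, $j=0,\dots,k-1$, with multiplicities $m_j=\binom{n}{j}-\binom{n}{j-1}$, and all $\theta_j>0$.

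For each unit eigenvector $x$ of $NN^T$ with eigenvalue $\theta_j$, set $y=N^Tx/\sqrt{\theta_j}$. The two-dimensional space spanned by $(x,0)$ and $(0,y)$ is invariant under $BB^T$. On it, $BB^T$ acts by the $2\times 2$ matrix
\[
\begin{pmatrix} n-k+1 & \sqrt{\theta_j}\\ \sqrt{\theta_j} & k\end{pmatrix},
\]
whose trace is $n+1$ and whose determinant is $k(n-k+1)-\theta_j=j(n+1-j)$. Its eigenvalues are therefore $n+1-j$ and $j$. Subtracting $2$ gives $n-1-j$ and $j-2$, each with multiplicity $m_j$.

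The complementary space consists of the vectors $(0,z)$ with $z\in\ker N$. It has dimension $\binom{n}{k}-\binom{n}{k-1}$, using $\mathrm{rank}(N)=\binom{n}{k-1}$ from Theorem~\ref{c.1}. On it, $BB^T$ acts as $kI$, which yields the eigenvalue $k-2$ with multiplicity $\binom{n}{k}-\binom{n}{k-1}$.

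It remains to reconcile these pieces with the statement. The value $j=0$ gives the eigenvalue $0$ of $BB^T$, which is the kernel vector accounted for in $\mathrm{rank}(B)$; its contribution of $-2$ is already included in the $-2$ count. This explains why the list $j-2$ starts at $j=1$, and the dimensions then sum to $k\binom{n}{k}$. I would do a final dimension count, checking that $\sum_{j} 2m_j-1+\binom{n}{k}-\binom{n}{k-1}=\binom{n+1}{k}-1$ using the telescoping identity $\sum_{j=0}^{k-1}m_j=\binom{n}{k-1}$.

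The main obstacle I expect is bookkeeping rather than algebra. Values may coincide: for example $j-2=k-2$ cannot occur since $j\le k-1$, but $n-1-j=k-2$ or $j-2=-2$ could. In such cases the listed items should be read as adding multiplicities. I would also note that $n\ge 2k$ guarantees $\binom{n}{k}-\binom{n}{k-1}>0$, and that the verification of invariance, namely $N^Ty=\sqrt{\theta_j}\,x$, follows directly from $NN^Tx=\theta_j x$.
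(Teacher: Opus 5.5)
Your proposal is correct and follows essentially the paper's route. The paper also obtains the multiplicity of $-2$ from $\mathrm{rank}(B)=\binom{n+1}{k}-1$. It then finds the rest of the spectrum of $BB^T$ by noting that an eigenvalue $\gamma\neq k$ must satisfy $(\gamma-k)(\gamma-(n-k+1))=\theta_j$, which is exactly your trace/determinant computation, together with $\gamma=k$ on $\ker N$. Your explicit $2\times 2$ invariant subspaces supply the multiplicity count that the paper only asserts ("it is not hard to see").

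One small correction to your bookkeeping remark: for $n\ge 2k$ no coincidences occur at all. Indeed $n-1-j\ge n-k\ge k>k-2$, and the list $j-2$ starts at $j=1$, so all listed eigenvalues are pairwise distinct.
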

\begin{proof}
Let $G=BL_n(k-1,k)$ be a subgraph of the Boolean lattice $BL_n$. Also, let $A(G)$ be the adjacency matrix of graph $G$, and let $B$ be the incidence matrix of the graph $G$ on $\binom{n+1}{k}$ vertices and $k\binom{n}{k}$ edges. 
Since $G$ is a connected bipartite graph, we have
 \[
\mathrm{rank}(B) = |V(G)| - 1 = \binom{n+1}{k}-1. 
\]
It is well known that $\mathrm{rank}(B^TB) = \mathrm{rank}(B)$ and $B^TB$ is a ${k\binom{n}{k}\times k\binom{n}{k}}$ matrix; since
for $k\geq2$ we have $k\binom{n}{k}>\binom{n+1}{k}-1$, it follows that $0$ is an eigenvalue of $B^TB$ with multiplicity $k\binom{n}{k} - \binom{n+1}{k} + 1$.
Thus, by Lemma~\ref{b.1}, $-2$ is an eigenvalue of the line graph $L(BL_n(k-1,k))$ with multiplicity $k\binom{n}{k} - \binom{n+1}{k} + 1$. 
On the other hand, by Lemma~\ref{b.2}, $BB^T =\Delta(G) +A(G)$, where 
\[
\Delta(G) = 
\begin{pmatrix}
(n-k+1)I_{\binom{n}{k-1}} & 0 \\
0 & kI_{\binom{n}{k}}
\end{pmatrix},
\]
and 
\[
A(G) = \begin{pmatrix}
0 & N \\
N^{T} & 0
\end{pmatrix},
\]
such that $N$ is the $\binom{n}{k-1} \times \binom{n}{k}$ bipartite incidence matrix so that rows correspond to vertices in the first part $k-1$-subsets of $G$, and columns correspond to vertices in the second part $k$-subsets of $G$. Based on previous Theorem we can verify that  $NN^{T}$ is an invertible matrix and it has nonzero eigenvalues
\[
\theta_j =(k-j)(n-k+1-j),
\]
for $j=0,1,\dots,k-1$, with  multiplicities 
\[
m(\theta_j) = \binom{n}{j} - \binom{n}{j-1}.
\] 
Also, it is not hard to see that if $\gamma$ is an eigenvalue of $BB^T$, where $\gamma\neq k$ then $(\gamma - k)(\gamma - (n - k + 1)$ is an eigenvalue of $NN^T$. Hence by solving the equation 
\[
(\gamma - k)(\gamma - (n - k + 1)=(k-j)(n-k+1-j),
\]
we have  $\gamma=j$ and $\gamma=n-j+1$ for $j=0,1,\ldots,k-1$, and hence, $BB^T$ has eigenvalues  $\gamma=j$ and $\gamma=n-j+1$  for $j=0,1,\ldots,k-1$   with the same  multiplicities $m(j)=m(n-j+1)=m(\theta_j)$. In particular, $\gamma=k$ is an eigenvalue of $BB^T$ with  multiplicity $\binom{n}{k} - \binom{n}{k-1}$. Also, we can verify that  all nonzero eigenvalues of $BB^T$ and $B^TB$ are identical  with the same multiplicities. Hence, for $j=1,\ldots,k-1$ $B^TB$ has eigenvalue $\gamma=j$ with the same  multiplicities $m(j)=m(\theta_j)$, also
for $j=0,1,\ldots,k-1$ $B^TB$ has eigenvalue $\gamma=n-j+1$ with the same  multiplicities $m(j)=m(\theta_j)$.
In particular, $\gamma=k$ is an eigenvalue of $B^TB$ with  multiplicity $\binom{n}{k} - \binom{n}{k-1}$.
Therefore, based on Lemma \ref{b.1},
 the line graph $L(BL_n(k-1,k))$ has eigenvalues 
\begin{itemize}
\item $ -2$ with multiplicity $k\binom{n}{k} - \binom{n+1}{k} + 1$,
\item  $\mu_j = j-2$, for $j=1,2,\ldots,k-1$, with the multiplicities  $m(\mu_j)$,
\item  $\mu_j = n-1-j$, for $j=0,1,2,\ldots,k-1$, with the multiplicities  $m(\mu_j)$,
\item $ k-2$ with multiplicity $\binom{n}{k} - \binom{n}{k-1}$,
\end{itemize}
where
\[
m(\mu_j) = \binom{n}{j} - \binom{n}{j-1}.
\] 
\end{proof}
\begin{example}\label{c.2.1}
Consider the graph $H(n)=BL_n(1,2)$. If $n\geq 4$ is a fixed positive integer, then the spectrum of the graph $L(n)$ is
\[
\mathrm{Spec}(L(n)) = \bigl\{ (n-1)^{1},\, (n-2)^{n-1},\, 0^{\frac{n(n-3)}{2}},\, (-1)^{n-1},\, (-2)^{\frac{(n-1)(n-2)}{2}}\bigr\}.
\]
In particular, this determines the multiplicities of all five distinct eigenvalues of $L(n)$, completing the result of~\cite{pap-sm-1}.
\end{example}
\begin{theorem}\label{c.3}
If $k\geq2$ is a positive integer and $n=2k-1$, then the line graph $L(BL_n(k-1,k))$ has eigenvalues 
\begin{itemize}
\item $ -2$, with multiplicity $ (k-2)\binom{2k-1}{k-1}+1$,
\item  $\mu_i = i-2$, for $i=1,2,\ldots,k-1$, with the multiplicities  $m(\mu_i)$,
\item $\mu_i = 2k-i-2$, for $i=0,1,2,\ldots,k-1$, with the multiplicities  $m(\mu_i)$,
\end{itemize}
where
\[
m(\mu_i) = \binom{2k-1}{i} - \binom{2k-1}{i-1}.
\] 
\end{theorem}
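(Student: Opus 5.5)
When $n=2k-1$ the graph $G=BL_{2k-1}(k-1,k)$ is $k$-regular, and I would use that to get the spectrum of $L(G)$ from the spectrum of $G$ (Theorem~\ref{c.1}) through Lemmas~\ref{b.1} and~\ref{b.2}. The first step is to note that $\Delta(G)=kI$, so Lemma~\ref{b.2} gives $BB^T=kI+A(G)$, with $B$ the incidence matrix of $G$. Theorem~\ref{c.1} with $n=2k-1$ says the eigenvalues of $A(G)$ are
\[
\pm\sqrt{(k-j)(2k-1-k+1-j)}=\pm(k-j),\qquad j=0,1,\ldots,k-1,
\]
each with multiplicity $\binom{2k-1}{j}-\binom{2k-1}{j-1}$. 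The eigenvalue $0$ has multiplicity $\binom{2k-1}{k}-\binom{2k-1}{k-1}=0$, so it does not occur. Hence $BB^T$ has eigenvalues $k+(k-j)=2k-j$ and $k-(k-j)=j$, both with multiplicity $m(\mu_j)$. The only zero eigenvalue of $BB^T$ is $j=0$, with multiplicity $\binom{2k-1}{0}=1$. This agrees with $\mathrm{rank}(B)=|V(G)|-1$, since $G$ is connected and bipartite.

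Next I would pass to $B^TB$. It has the same nonzero eigenvalues as $BB^T$, with the same multiplicities. Those are $2k-j$ for $j=0,\ldots,k-1$ and $j$ for $j=1,\ldots,k-1$. Every remaining eigenvalue of $B^TB$ is $0$. The number of edges is $|E(G)|=k\binom{2k-1}{k}=k\binom{2k-1}{k-1}$ and $\mathrm{rank}(B)=2\binom{2k-1}{k-1}-1$, so $0$ occurs with multiplicity
\[
k\binom{2k-1}{k-1}-2\binom{2k-1}{k-1}+1=(k-2)\binom{2k-1}{k-1}+1.
\]

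Finally, Lemma~\ref{b.1} gives $A(L(G))=B^TB-2I$, so each eigenvalue shifts down by $2$. This yields $-2$ with multiplicity $(k-2)\binom{2k-1}{k-1}+1$, the eigenvalues $2k-j-2$ for $j=0,\ldots,k-1$, and $j-2$ for $j=1,\ldots,k-1$, each with multiplicity $\binom{2k-1}{j}-\binom{2k-1}{j-1}$, exactly as stated.

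The argument has no real obstacle; the one point needing care is collisions between these lists. For $k\ge3$, the shifted value $j-2$ equals $0$ at $j=2$, and $2k-j-2$ runs down only to $k-1$, so the two families never overlap. The eigenvalue $-2$ arises only from the kernel of $B^TB$ (that is, $j=0$ in the small family is excluded), so its multiplicity is not inflated. I would also check the count: the multiplicities of $BB^T$ sum to $2\sum_{j=0}^{k-1}m(\mu_j)=2\binom{2k-1}{k-1}$, which is consistent with the computation above.
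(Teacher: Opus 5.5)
Your proof is correct and follows the paper's route: write $BB^T=kI+A(G)$, read off its eigenvalues $2k-j$ and $j$, transfer the nonzero ones to $B^TB$, count the kernel via $\mathrm{rank}(B)=|V(G)|-1$, and shift by $-2$ using Lemma~\ref{b.1}. The only differences are minor: you get the spectrum $\pm(k-j)$ of $G$ by specializing Theorem~\ref{c.1} to $n=2k-1$ rather than citing Biggs for $2{\cdot}O_k$, and you check explicitly that the families $\{j-2\}$ and $\{2k-j-2\}$ do not collide with each other or with $-2$, a check the paper leaves implicit.
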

\begin{proof}
Let $G=BL_n(k-1,k)$ be a subgraph of the Boolean lattice $BL_n$. Recall that $V(G)=V_1\cup V_2$, where
\[
V_1 = \{ U\subset [n]  \mid |U|=k-1 \},
\]
and
\[
V_2 = \{  W\subset [n]  \mid |W|=k\},
\]
is defined already. Let $A(G)$ be the adjacency matrix of $G$, and let $B$ be the incidence matrix of $G$ on $2\binom{2k-1}{k-1}$ vertices and $k\binom{2k-1}{k-1}$ edges. Then $BB^T = \Delta(G) + A(G)$, where $\Delta(G)$ is the diagonal matrix of size $2\binom{2k-1}{k-1} \times 2\binom{2k-1}{k-1}$ with each diagonal entry equal to $k$ (the common degree of all vertices of $G$).
It is well known if $n=2k-1$ then the  graph $G$ (the adjacency matrix $A(G)$), has eigenvalues $\lambda_i=\pm(k-i)$ , for $i=0,1,2,\ldots,k-1$ with the multiplicities
\[
m(\lambda_i) = \binom{2k-1}{i} - \binom{2k-1}{i-1},
\] 
(see ~\cite{N.Biggs-1} Page 74). From the positive eigenvalues of $G$, if $\lambda_i=k-i>0$, then based on Lemma \ref{b.2}, $BB^T$ has eigenvalues 
$\theta_i=k+\lambda_i=2k-i$,
hence $\theta_0=2k, \theta_1=2k-1, \ldots,\theta_{k-1}=k+1$ are eigenvalues of $BB^T$  with the same multiplicities $m(\theta_i)= m(\lambda_i)$,
also; from the negative eigenvalues of $G$, if $\lambda_i=-(k-i)<0$, then based on Lemma \ref{b.2}, $BB^T$ has eigenvalues $\theta_i=k-\lambda_i=i$,  
and hence $\theta_0=0, \theta_1=1, \ldots,\theta_{k-1}=k-1$ are eigenvalues of $BB^T$  with the same multiplicities $m(\theta_i)= m(\lambda_i)$.
In particular, since $\lambda_0=-k$ is a negative eigenvalue of $G$ with multiplicity $1$, then $\theta_0=0$ is an eigenvalue of $BB^T$ with multiplicity $m(\theta_0)= m(\lambda_0)=1$.  Also, we can verify that  all nonzero eigenvalues of $BB^T$ and $B^TB$ are identical  with the same multiplicities.
Moreover, since $G$ is a connected bipartite graph, we have
\[
\mathrm{rank}(B) = |V(G)| - 1 = 2\binom{2k-1}{k-1}-1.
\]
It is well known that $\mathrm{rank}(B) = \mathrm{rank}(B^TB) = \mathrm{rank}(BB^T) = \mathrm{rank}(B^T)$. 
Since $B^TB$ is a $k\binom{2k-1}{k-1} \times k\binom{2k-1}{k-1}$ matrix, and since
for $k\geq2$ we have $k\binom{2k-1}{k-1} > 2\binom{2k-1}{k-1}-1$, it follows that $0$ is an eigenvalue of $B^TB$ with multiplicity 
\[
m(0) = k\binom{2k-1}{k-1} - \bigl(2\binom{2k-1}{k-1}-1\bigr) = (k-2)\binom{2k-1}{k-1}+1. 
\]
Thus, by Lemma~\ref{b.1}, $-2$ is an eigenvalue of the line graph $L(BL_n(k-1,k))$ with multiplicity $m(-2)=m(0)$. 
On the other hand, if $L(G)$ is the line graph of $G$, then by Lemma~\ref{b.1}, $B^T B =  A(L(G))+2I_{m}$, where $A(L(G))$ is the adjacency matrix of $L(G)$ and $m=k\binom{2k-1}{k-1}$. Hence, 
for $i=0, 1, 2,\ldots,k-1$, if $\lambda_i>0$, then $\mu_i=\theta_i-2=2k-i-2$ and hence $\mu_0=2k-2, \mu_1=2k-3, \ldots, \mu_{k-1}=k-1$ are eigenvalues of $L(G)$ with multiplicities $m(\mu_i)=m(\theta_i)$; for $i= 1, 2,\ldots,k-1$, if $\lambda_i<0$, then $\mu_i=\theta_i-2=i-2$ and hence $\mu_1=-1, \ldots, \mu_{k-1}=k-3$ are eigenvalues of $L(G)$ with multiplicities $m(\mu_i)=m(\theta_i)$.
\end{proof}

\section{Conclusion}

In this paper, we have determined the complete adjacency spectrum of the line graph $L(BL_n(k-1,k))$ for all integers $k \geq 2$ and $n \geq 2k-1$. Our main contributions are as follows.

\begin{itemize}
  \item In Theorem~\ref{c.1}, we computed the spectrum of the bipartite layer graph $BL_n(k-1,k)$ for $n \geq 2k-1$, expressing the eigenvalues and their multiplicities via those of the Johnson graph $J(n,k-1)$.
  \item In Theorem~\ref{c.2} (the case $n \geq 2k$), we showed that all eigenvalues of $L(BL_n(k-1,k))$ are integers, and we gave explicit formulas for eigenvalues and their multiplicities in terms of binomial coefficients.
  \item In Theorem~\ref{c.3} (the case $n = 2k-1$), where $BL_n(k-1,k) \cong 2{\cdot}O_k$, we established the complete integral spectrum of the vertex-transitive line graph $L(BL_n(k-1,k))$.
  \item As a special case (Example~\ref{c.2.1}), we completed the result of Mirafzal~\cite{pap-sm-1} by computing the multiplicities of all eigenvalues of the line graph $L(n) = L(BL_n(1,2))$, which were left undetermined in~\cite{pap-sm-1}.
\end{itemize}

These results show that the line graphs $L(BL_n(k-1,k))$ constitute a rich infinite family of integral graphs accessible through combinatorial methods. Several natural questions remain open. For instance, one may ask whether $L(BL_n(k-1,k))$ is determined by its spectrum---that is, whether a graph with the same spectrum must be isomorphic to $L(BL_n(k-1,k))$. Another direction is to determine the distance spectrum of these line graphs, extending the distance-integral results of Mirafzal~\cite{Mirafzal-crown} and Kogani--Mirafzal~\cite{Kogani-Mirafzal} to the present family. We hope that the techniques introduced here will be useful in these and related investigations.


\section*{Funding and Conflict of Interest} 
The authors have received no funding for this work and declare no conflict of interest.

\bigskip

\begin{thebibliography}{12}
\bibitem{Abdollahi-1}A. Abdollahi and E. Vatandoost, 
Which Cayley graphs are integral?, \emph{Electron. J. Combin.}, 
\textbf{16}(1) (2009), R122, 1--17.
\bibitem{Ahmadi-1}O. Ahmadi, N. Alon, I.\,F. Blake and I.\,E. Shparlinski, 
Graphs with integral spectrum, \emph{Linear Algebra Appl.},
\textbf{430} (2009), 547--552.
\bibitem{N.Biggs-1}N. Biggs, 
Some odd graph theory, \emph{Ann. New York Acad. Sci.},
\textbf{319} (1979), 71--81.
\bibitem{Brower-1}A.\,E. Brouwer, A.\,M. Cohen and A. Neumaier, 
\emph{Distance Regular Graphs}, Springer, Berlin, 1989.
\bibitem{Brower-2}A.\,E. Brouwer and W.\,H. Haemers, 
\emph{Spectra of Graphs}, Springer, New York, 2012.
\bibitem{Bussemaker}F. C. Bussemaker and D. M. Cvetković, 
There are exactly $13$ connected, cubic, integral graphs,
 \emph{Publikacije Elektrotehničkog Fakulteta. Serija Matematika i Fizika}, \textbf{544} (1976), 43--48.
\bibitem{M.Cao}M. Cao, B. Lv and K. Wang,
On even-cycle-free subgraphs of the doubled Johnson graph,
\emph{Appl. Math. Comput.}, \textbf{403} (2021), 125763.
\bibitem{D.Cvetkovć}D. Cvetković, S. Simić and D. Stevanović,  
4-regular integral graphs, \emph{Univ. Beograd. Publ. Elektrotehn. Fak}, Ser. Mat. \textbf{9} (1998), 109--123.
\bibitem{pap-cg-1}C. Godsil and G. Royle,
\emph{Algebraic Graph Theory}, Springer, New York, 2001.
\bibitem{Harary-1}F. Harary and A. Schwenk, 
Which graphs have integral spectra?, \emph{Lect. Notes Math.},
\textbf{406}, Springer, Berlin, (1974), 45--50.
\bibitem{Kogani-Mirafzal}R. Kogani and S.\,M. Mirafzal, 
On determining the distance spectrum of a class of distance integral graphs, \emph{J. Algebr. Syst.}, 
\textbf{10}(2) (2023), 299--308.
\bibitem{Mirafzal-crown}S.\,M. Mirafzal, 
The line graph of the crown graph is distance integral, \emph{Linear Multilinear Alg.}, \textbf{71}(4) (2023), 662--672.
\bibitem{pap-sm-1}S.\,M. Mirafzal,
A new class of integral graphs constructed from the hypercube,
\emph{Linear Algebra Appl.}, \textbf{558} (2018), 186--194.
\bibitem{pap-sm-2}S.\,M. Mirafzal,
Cayley properties of the line graphs induced by consecutive layers of the hypercube, \emph{Bull. Malays. Math. Sci. Soc.}, \textbf{44} (2021), 1309--1326.
\bibitem{Schwenk}A. J. Schwenk, 
Exactly thirteen connected cubic graphs have integral spectra. In: Alavi, Y., Lick, D.R. (eds) Theory and Applications of Graphs. 
Lecture Notes Math., vol 642. Springer, Berlin, Heidelberg. https://doi.org/10.1007/BFb0070407.
\bibitem{Watanabe}M. Watanabe and A. J. Schwenk, Integral starlike trees,  \emph{J. Austral. Math. Soc., Series A},  \textbf{28}(1) (1979), 120--128.
\end{thebibliography}
\end{document}